\newtheorem{theorem}{Theorem}
\newtheorem{lemma}[theorem]{Lemma}
\newtheorem{proposition}[theorem]{Proposition}
\newtheorem{corollary}[theorem]{Corollary}
\theoremstyle{definition}
\newtheorem{remark}[theorem]{Remark}
\begin{document}

\title[GK-dimension of $2\times 2$ generic Lie matrices]
{GK-dimension of the Lie algebra\\
of generic $2\times 2$ matrices}

\author[V. Drensky, P. Koshlukov, and G. Machado]
{Vesselin Drensky, Plamen Koshlukov, Gustavo Grings Machado}
\address{Institute of Mathematics and Informatics,
Bulgarian Academy of Sciences,
Acad. G. Bonchev Str., Block 8, 1113 Sofia, Bulgaria}
\email{drensky@math.bas.bg}

\address{Department of Mathematics, IMECC, UNICAMP,
S\'ergio Buarque de Holanda 651, Campinas,
SP 13083-859, Brazil}
\email{plamen@ime.unicamp.br}

\address{Department of Mathematics, CCNE, UFSM,
Faixa de Camobi, Campus UFSM,
Santa Maria, RS 97105-900, Brazil}
\email{grings@smail.ufsm.br}

\thanks
{This project was partially supported by Grant I02/18
``Computational and Combinatorial Methods
in Algebra and Applications''
of the Bulgarian National Science Fund.
P. Koshlukov acknowledges partial support by FAPESP grant No. 2014/08608-0, and by CNPq grants No. 304003/2011-5 and No. 480139/2012-1.}

\subjclass[2010]
{Primary: 16P90; Secondary: 16R30, 17B01, 17B20}
\keywords{Gelfand-Kirillov dimension, generic matrices, matrix invariants,
relatively free Lie algebras}

\begin{abstract}
Recently Machado and Koshlukov have computed the Gelfand-Kirillov dimension
of the relatively free algebra $F_m=F_m(\text{\rm var}(sl_2(K)))$ of rank $m$
in the variety of algebras generated by the three-dimensional simple Lie algebra
$sl_2(K)$ over an infinite field $K$ of characteristic different from 2.
They have shown that $\text{\rm GKdim}(F_m)=3(m-1)$.
The algebra $F_m$ is isomorphic to the Lie algebra generated by $m$ generic
$2\times 2$ matrices. Now we give a new proof for
$\text{\rm GKdim}(F_m)$ using classical results of Procesi and Razmyslov
combined with the observation that the commutator ideal of $F_m$ is a module
of the center of the associative algebra generated by $m$ generic traceless $2\times 2$ matrices.
\end{abstract}

\maketitle

\section{Introduction}

Let $R$ be a (not necessarily associative) algebra generated by $m$
elements $r_1,\ldots,r_m$ over a field $K$ and
let $V_n$ be the vector subspace of $R$ spanned by all products
$r_{i_1}\cdots r_{i_k}$, $k\leq n$. The growth function of $R$
with respect to the given system of generators is
\[
g_R(n)=\dim(V_n),\quad n\geq 0.
\]
The Gelfand-Kirillov dimension of $R$
is defined as
\[
\text{\rm GKdim}(R)=\limsup_{n\to\infty}\log_n(g_R(n)).
\]
It does not depend on the choice of
the generators of $R$. See the book \cite{KL} for a background on GKdim. If the algebra $R$ is graded,
\[
R=\bigoplus_{n\geq 0}R^{(n)},
\]
where $R^{(n)}$ is the homogeneous component of degree $n$ of $R$, then
the Hilbert series of $R$ is the formal power series
\[
H(R,t)=\sum_{n\geq 0}\dim(R^{(n)})t^n.
\]
If $R$ is generated by its homogeneous elements of first degree, then
its growth function is
\[
g_R(n)=\sum_{l=0}^n\dim(R^{(l)}).
\]
In the general case, if $R$ is a graded algebra generated by a finite
system of (homogeneous)
elements of arbitrary degree,
its Gelfand-Kirillov dimension can be expressed using again its Hilbert series
as
\[
\text{\rm GKdim}(R)=\limsup_{n\to\infty}\log_n
\left(\sum_{l=0}^n\dim(R^{(l)})\right).
\]

When studying varieties of $K$-algebras $\mathfrak V$,
all information for the $m$-generated algebras in $\mathfrak V$
is carried by the relatively free algebra $F_m({\mathfrak V})$
of rank $m$ in $\mathfrak V$. When the base field $K$
is of characteristic 0, a lot is known for
the Gelfand-Kirillov dimension of relatively free associative algebras,
see the book \cite{KL}, the survey article \cite{D2},
or the paper \cite{MK}.
In particular, $\text{\rm GKdim}(F_m({\mathfrak V}))$ is an integer
for all proper varieties of associative algebras.
Almost nothing
is known for relatively free Lie algebras. Using the bases of free
nilpotent-by-abelian Lie algebras given by Shmelkin \cite{Sh},
it is easy to see that
\[
\text{\rm GKdim}(F_m({\mathfrak N}_c{\mathfrak A}))
=\text{\rm GKdim}(L_m/(L'_m)^{c+1})=mc,
\]
where $m>1$ and $L_m$ is the free $m$-generated Lie algebra.
Together with free nilpotent Lie algebras where the Gelfand-Kirillov
dimension is equal to 0, these are the only free polynilpotent Lie algebras
of finite Gelfand-Kirillov dimension, see Petrogradsky \cite{Pe}.

Recently Machado and Koshlukov \cite{MK} have computed the Gelfand-Kirillov dimension
of the relatively free algebra $F_m=F_m(\text{\rm var}(sl_2(K)))$ of rank $m>2$
in the variety of algebras generated by the three-dimensional simple Lie algebra
$sl_2(K)$ over an infinite field $K$ of characteristic different from 2.
They have shown that $\text{\rm GKdim}(F_m)=3(m-1)$. Their proof is based on
a careful analysis of the explicit expression of the Hilbert series of
$F_m$ obtained by Drensky \cite{D1}. The case $m=2$ was handled before
by Bahturin \cite{B} who showed that $\text{\rm GKdim}(F_2)=3$.
The algebra $F_m$ is isomorphic to the Lie algebra generated by $m$ generic traceless
$2\times 2$ matrices. The purpose of our paper is to give a new
proof for $\text{\rm GKdim}(F_m)$ using
classical results of Procesi \cite{P1, P2} on Gelfand-Kirillov dimension
of the algebra of generic matrices  and Razmyslov \cite{R}
on the weak polynomial identities of matrices,
combined with the observation that the commutator ideal of $F_m$ is a module
over the center of the associative algebra generated by $m$ generic traceless $2\times 2$ matrices.
We believe that the present approach is more adequate
for generalizations for other finite dimensional
simple Lie algebras than the approach in \cite{MK}.

\section{The proof}

The following statement and its corollary are folklorely known.
We include the proof for self-completeness of the exposition
and also because we were not able to find an explicit reference.

\begin{lemma}\label{multiplicity of the pole}
Let $R$ be a finitely generated graded algebra with Hilbert series
of the form
\[
H(R,t)=h(t)\prod_{i=1}^s\frac{1}{(1-t^{d_i})},
\]
where $h(t)\in {\mathbb C}[t]$ is a polynomial
and the $d_i$'s are positive integers.
Then the Gelfand-Kirillov dimension of $R$ is equal to the multiplicity
of $1$ as a pole of $H(R,t)$.
\end{lemma}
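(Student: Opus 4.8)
The plan is to reduce $\text{\rm GKdim}(R)=\limsup_{n\to\infty}\log_n g_R(n)$ to the growth of the coefficients of a single rational function, and then to read off that growth from the order of the pole at $t=1$. Set $a_l=\dim(R^{(l)})$ and
\[
G(t)=\frac{H(R,t)}{1-t}=\sum_{n\ge 0}g_R(n)\,t^n,
\]
so that $g_R(n)=\sum_{l=0}^n a_l$ is exactly the coefficient sequence of $G$. Two features of this sequence will do all the work: the $a_l$ are non-negative (they are dimensions), and hence $g_R(n)$ is non-negative and non-decreasing. Write $p$ for the multiplicity of $1$ as a pole of $H(R,t)$.

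First I would pin down the behaviour of $G$ along the real axis as $t\to 1^-$. Factoring $1-t^{d_i}=(1-t)\psi_i(t)$ with $\psi_i(t)=1+t+\cdots+t^{d_i-1}$, so $\psi_i(1)=d_i$, and writing $h(t)=(1-t)^v\tilde h(t)$ with $\tilde h(1)\neq 0$, one finds $p=s-v$ and
\[
\lim_{t\to 1^-}(1-t)^{p+1}G(t)=\lim_{t\to 1^-}(1-t)^{p}H(R,t)=\frac{\tilde h(1)}{\prod_{i=1}^s d_i}=:A.
\]
Because $a_l\ge 0$ the series $H(R,t)$ is positive on $(0,1)$, whence $A\ge 0$, and $A\neq 0$ since the pole has order exactly $p$; thus $A>0$ and $G(t)\sim A(1-t)^{-(p+1)}$. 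I would dispose of the degenerate case $p\le 0$ at the outset: by Pringsheim's theorem a non-negative power series with no pole at its radius of convergence must already be a polynomial, so $R$ is finite dimensional and both sides of the asserted equality vanish.

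Next I would convert this one-sided asymptotic into genuine polynomial growth. Applying Karamata's Tauberian theorem to the non-negative sequence $g_R(n)$ turns $G(t)\sim A(1-t)^{-(p+1)}$ into
\[
S(N):=\sum_{n=0}^N g_R(n)\sim \frac{A}{(p+1)!}\,N^{p+1},
\]
so $S(N)=\Theta(N^{p+1})$. The monotonicity of $g_R$ then transfers this back to $g_R(n)$ itself: from $S(N)\le (N+1)\,g_R(N)$ I get the lower bound $g_R(N)\ge S(N)/(N+1)=\Omega(N^p)$, while from $S(N)\ge\sum_{n>N/2}^{N}g_R(n)\ge \tfrac{N}{2}\,g_R(\lfloor N/2\rfloor)$ I get $g_R(\lfloor N/2\rfloor)\le 2S(N)/N=O(N^p)$, i.e. $g_R(M)=O(M^p)$. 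Hence $g_R(n)=\Theta(n^p)$, so $\log_n g_R(n)\to p$ and $\text{\rm GKdim}(R)=p$.

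The step I expect to be the real obstacle is the \emph{sharp upper} bound $g_R(n)=O(n^p)$. A naive partial-fraction expansion of $G$ only bounds $g_R(n)$ by the largest order of a pole of $G$ on the unit circle, and a priori some other root of unity could carry a pole of equal or larger order, which would spoil the identification with the pole at $t=1$. It is precisely the non-negativity of the coefficients that forbids this (the positive real point on the circle of convergence is a maximal-order singularity); the Tauberian route above packages exactly this phenomenon, which is why I would route the upper bound through $S(N)$ and monotonicity rather than through term-by-term estimates.
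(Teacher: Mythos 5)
Your proposal is correct, and it reaches the result by a genuinely different route than the paper. The paper's proof is entirely elementary and self-contained: it expands $H(R,t)$ in partial fractions over the $d$-th roots of unity, where $d=\text{\rm lcm}(d_1,\ldots,d_s)$, writes $a_n=\sum_{p=1}^k\binom{n+p-1}{p-1}\beta_{pn}$ with each $\beta_{pn}$ periodic in $n$ (here $k$ is the maximal pole order on the unit circle), and exploits this formula twice: the bound $a_n=O(n^{k-1})$ gives $\text{\rm GKdim}(R)\le k$, while summing $\beta_{kn}$ over a full period annihilates the contributions of the roots of unity different from $1$ and shows that the leading coefficient $\alpha_{k0}$ at $t=1$ is strictly positive --- i.e., that $t=1$ is a pole of maximal order $k$ --- whence the partial sums grow like $n^k$. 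You replace this bookkeeping with two classical analytic theorems: Pringsheim's theorem, to settle the case where $1$ is not a pole (forcing $H$ to be a polynomial and $R$ finite dimensional), and the Hardy--Littlewood--Karamata Tauberian theorem, to turn $G(t)\sim A(1-t)^{-(p+1)}$ into $S(N)\sim AN^{p+1}/(p+1)!$, after which monotonicity of $g_R$ yields $g_R(n)=\Theta(n^p)$. You also correctly isolated the crux: a priori another root of unity could carry a pole of order $\geq p$, and only the non-negativity of the coefficients forbids this; the paper proves exactly that fact by hand via $0<\sum_{l=0}^{d-1}\beta_{k,dn+l}=d\alpha_{k0}$, whereas you outsource it to the Tauberian theorem. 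The trade-off: the paper's argument uses nothing beyond partial fractions and binomial estimates, and it establishes along the way the slightly stronger statement that the pole at $1$ has maximal order among all poles; yours is shorter modulo the quoted theorems and extends verbatim to Hilbert series that are not rational, provided they have regularly varying behaviour at $1$. One simplification of your own argument: apply Karamata directly to $H(R,t)\sim A(1-t)^{-p}$, whose coefficients are the $a_l\geq 0$; its conclusion is precisely $g_R(n)=\sum_{l\le n}a_l\sim An^p/p!$, which makes the detour through $G(t)$, $S(N)$, and the two monotonicity estimates unnecessary.
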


\begin{proof}
It is sufficient to consider the case when
$R$ is not finite dimensional and hence
its Hilbert series has a nontrivial denominator.
Let $d$ be the least common multiple of the degrees $d_i$. Then
\[
H(R,t)=\sum_{n\geq 0}a_nt^n=f(t)+\sum_{p=1}^k\sum_{q=0}^{d-1}
\frac{\alpha_{pq}}{(1-\omega_qt)^p}
\]
\[
=f(t)+\sum_{n\geq 0}\left(\sum_{p=1}^k\binom{n+p-1}{p-1}
\sum_{q=0}^{d-1}\alpha_{pq}\omega_q^n\right)t^n,
\]
where $f(t)\in {\mathbb C}[t]$, $\alpha_{pq}\in{\mathbb C}$,
$\omega_0=1,\omega_1,\ldots,\omega_{d-1}$ are the $d$-th roots of 1,
and at least one of the coefficients $\alpha_{kq}$ is different from zero.
Since $\omega_q^d=1$, the sequences
\[
\beta_{pn}=\sum_{q=0}^{d-1}\alpha_{pq}\omega_q^n,
\quad p=1,\ldots,k,
\]
are periodic with period $d$ and for $n$ large enough the coefficients
$a_n$ of the Hilbert series $H(R,t)$ are bounded by polynomials
of degree $k-1$ in $n$. Hence the sequence
\[
\sum_{l=0}^na_l=\sum_{l=0}^n\dim(R^{(l)})
\]
needed for the definition of the Gelfand-Kirillov dimension of $R$
is bounded by a polynomial of degree $k$ in $n$ and
\[
\text{\rm GKdim}(R)\leq k.
\]
The asymptotics of the coefficients $a_n$ of
\[
H(R,t)=f(t)+\sum_{n\geq 0}\left(\sum_{p=1}^k\binom{n+p-1}{p-1}
\beta_{pn}\right)t^n,
\]
is determined by $\beta_{kn}$. Since $a_n$ are positive integers, we derive
that the periodic sequence $\beta_{kn}$, $n=0,1,2,\ldots$, consists of
nonnegative reals and at least one of them is positive. Since $\omega_q^d=1$, if $\omega_q\not=1$, then
$1+\omega_q+\omega_q^2+\cdots+\omega_q^{d-1}=0$. Hence
\[
0<\sum_{l=0}^{d-1}\beta_{k,dn+l}
=\sum_{l=0}^{d-1}\sum_{q=0}^{d-1}\alpha_{kq}\omega_q^{dn+l}
\]
\[
=\sum_{q=0}^{d-1}\alpha_{kq}\sum_{l=0}^{d-1}\omega_q^l
=d\alpha_{k0}.
\]
Therefore $\alpha_{k0}>0$. We consider the partial sum
$p_{dn}=a_0+a_1+\cdots+a_{dn}$ of the coefficients of the
Hilbert series $H(R,t)$. Its asymptotics is determined by
\[
\tilde{p}_{dn}=\sum_{c=0}^{dn}\binom{c+k-1}{k-1}\beta_{kc}
\approx\frac{1}{(k-1)!}\sum_{c=0}^{dn}c^{k-1}\beta_{kc}
\]
\[
\approx\frac{1}{(k-1)!}\sum_{e=0}^n(ed)^{k-1}
\sum_{l=0}^{d-1}\beta_{k,ed+l}
=\frac{d\alpha_{k0}}{(k-1)!}\sum_{e=0}^n(ed)^{k-1}
\]
and this is a polynomial of degree $k$ in $n$. Hence
\[
\text{\rm GKdim}(R)=\limsup_{n\to\infty}\log_n\left(\sum_{l=0}^na_l\right)
\geq\limsup_{n\to\infty}\log_n\left(\sum_{c=0}^{dn}a_c\right)
\]
\[
=\limsup_{n\to\infty}\log_{dn}(p_{dn})
=\limsup_{n\to\infty}\log_{dn}(\tilde{p}_{dn})=k
\]
which, together with the opposite inequality
$\text{\rm GKdim}(R)\leq k$, completes the proof.
\end{proof}

\begin{corollary}\label{GKdim of commutative algebras}
Let $R$ be a finitely generated graded algebra and let $C$ be a
finitely generated graded subalgebra of the center of $R$ such that
$R$ is a finitely generated $C$-module.
Then the Gelfand-Kirillov dimension of $R$ is equal to the multiplicity
of $1$ as a pole of $H(R,t)$.
\end{corollary}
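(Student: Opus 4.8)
The plan is to reduce the corollary to Lemma~\ref{multiplicity of the pole} by showing that under the stated hypotheses the Hilbert series $H(R,t)$ automatically has the rational form required by the lemma, namely
\[
H(R,t)=h(t)\prod_{i=1}^s\frac{1}{(1-t^{d_i})}
\]
with $h(t)\in{\mathbb C}[t]$ and positive integers $d_i$. Once this form is established, the lemma applies verbatim and the conclusion that $\text{\rm GKdim}(R)$ equals the multiplicity of $1$ as a pole of $H(R,t)$ follows immediately. So the entire content of the corollary is the derivation of this rational form from the module-finiteness over a finitely generated graded central subalgebra $C$.

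First I would treat the central subalgebra $C$. Since $C$ is a finitely generated commutative graded $K$-algebra, it is a graded quotient of a polynomial ring, and by a standard Hilbert--Serre type argument its Hilbert series is of the form $H(C,t)=h_0(t)\prod_{i=1}^s (1-t^{d_i})^{-1}$, where the $d_i$ are the degrees of a homogeneous system of parameters (or simply of a homogeneous generating set, after possibly enlarging the product). This is exactly the shape demanded in Lemma~\ref{multiplicity of the pole}. The key point is that commutative finitely generated graded algebras always have rational Hilbert series with denominator a product of cyclotomic-type factors $(1-t^{d_i})$.

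Next I would pass from $C$ to $R$ using the hypothesis that $R$ is a finitely generated graded $C$-module. Choose finitely many homogeneous module generators $u_1,\ldots,u_r$ of $R$ over $C$, say of degrees $e_1,\ldots,e_r$. Then $R=\sum_{j=1}^r C u_j$, and taking Hilbert series of this (not necessarily direct) sum of graded $C$-submodules yields a bound, but to get an exact rational expression I would instead invoke that $R$, being a finitely generated module over the Noetherian graded ring $C$, admits a finite graded free resolution or at least that its Hilbert series is a $\mathbb Z[t]$-linear combination $H(R,t)=\sum_j t^{e_j} H(C,t)$ corrected by the relations; in all cases the outcome is
\[
H(R,t)=h(t)\prod_{i=1}^s\frac{1}{(1-t^{d_i})}
\]
for some $h(t)\in{\mathbb C}[t]$, because tensoring and taking kernels over $C$ keeps the denominator inside the multiplicatively closed set generated by the $(1-t^{d_i})$. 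I would then apply Lemma~\ref{multiplicity of the pole} to conclude.

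The main obstacle I expect is justifying cleanly that the Hilbert series of a finitely generated graded module over $C$ shares the denominator $\prod(1-t^{d_i})$ of $C$ itself, rather than merely being dominated by a multiple of $H(C,t)$. The cheap surjection $\bigoplus_j C(-e_j)\twoheadrightarrow R$ only gives a coefficientwise inequality $\dim R^{(n)}\le \sum_j \dim C^{(n-e_j)}$, which bounds the pole multiplicity from above but not from below, and in principle cancellation in $h(t)$ could lower the order of the pole. The honest fix is to use that $C$ is Noetherian (being finitely generated commutative), so the submodule of relations is itself finitely generated and graded; iterating produces a finite resolution by finite graded free $C$-modules, and the alternating sum of their Hilbert series gives $H(R,t)$ as $h(t)H(C,t)$ with $h(t)\in{\mathbb Z}[t]$ exactly. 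This exhibits the required denominator with no unaccounted factors, and the positivity arguments internal to Lemma~\ref{multiplicity of the pole} then handle any spurious cancellation, so that the pole multiplicity genuinely computes the Gelfand--Kirillov dimension.
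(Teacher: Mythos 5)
Your overall strategy is exactly the paper's: the whole proof consists of observing that $H(R,t)$ has the rational form required by Lemma~\ref{multiplicity of the pole} and then invoking that lemma. The paper does this in one line by citing the Hilbert--Serre theorem (the Hilbert series of any finitely generated graded module over a finitely generated commutative graded algebra has the form $h(t)\prod_i(1-t^{d_i})^{-1}$, referenced to Atiyah--Macdonald), which is precisely the statement you set out to rederive. Your reduction, and your remark that possible cancellation in $h(t)$ is harmless because the lemma works with the pole multiplicity of $H(R,t)$ itself, are both correct.

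However, your ``honest fix'' contains a genuine error. It is false that a finitely generated graded module over a Noetherian graded commutative ring $C$ admits a \emph{finite} resolution by finitely generated graded free $C$-modules: iterating the syzygy construction over $C$ need not terminate. For instance, over $C=K[x]/(x^2)$ the module $K=C/(x)$ has the infinite minimal free resolution $\cdots\to C\to C\to C\to K\to 0$; finiteness of free resolutions is a regularity property, available over polynomial rings (Hilbert's syzygy theorem) but not over arbitrary finitely generated graded algebras --- and the algebras to which the corollary is actually applied in this paper, the trace algebras $C_{km}$ and $C_{km}^{(0)}$, are in general not polynomial rings. Two standard repairs: (a) present $C$ as a graded quotient of a weighted polynomial ring $A=K[x_1,\ldots,x_s]$ with $\deg x_i=d_i$; then $R$ is a finitely generated graded $A$-module, Hilbert's syzygy theorem over $A$ yields a finite graded free resolution, and the alternating sum of Hilbert series gives $H(R,t)=h(t)\prod_{i=1}^s(1-t^{d_i})^{-1}$ exactly; or (b) prove Hilbert--Serre directly by induction on $s$: for a finitely generated graded module $M$ over $A$, multiplication by $x_s$ maps $M^{(n)}\to M^{(n+d_s)}$, its kernel and cokernel are finitely generated graded modules over $K[x_1,\ldots,x_{s-1}]$, and the resulting four-term exact sequence gives $(1-t^{d_s})H(M,t)=H(\mathrm{coker},t)-t^{d_s}H(\ker,t)+p(t)$ with $p(t)$ a polynomial, so induction applies. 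With either repair, the rest of your argument goes through unchanged and coincides with the paper's proof.
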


\begin{proof}
By the Hilbert-Serre theorem (see e.g., \cite{AM}), the Hilbert series of
any finitely generated graded module $M$ over a finitely generated graded
commutative algebra $C$ is of the form
\[
H(M,t)=h(t)\prod_{i=1}^k\frac{1}{(1-t^{d_i})},
\quad h(t)\in {\mathbb C}[t],d_i>0.
\]
Hence the proof follows immediately from Lemma
\ref{multiplicity of the pole}.
\end{proof}

In the sequel we assume that the base field $K$ is of characteristic 0.
Let
\[
\Omega_{km}=K[Y_{km}]=K[y_{pq}^{(i)}\mid p,q=1,\ldots,k,i=1,\ldots,m]
\]
be the algebra of polynomials in $k^2m$ commuting variables and let
\[
y_i=(y_{pq}^{(i)}),\quad i=1,\ldots,m,
\]
be $m$ generic $k\times k$ matrices. We consider the following algebras:

\noindent $R_{km}$ -- the generic matrix
algebra. This is the subalgebra
generated by $y_1,\ldots,y_m$ of the associative $k\times k$
matrix algebra $M_k(\Omega_{km})$ with entries from $\Omega_{km}$.

\noindent $C_{km}$ -- the pure trace algebra. This is the subalgebra
of $\Omega_{km}$ generated by the traces of the products,
$\text{tr}(y_{i_1}\cdots y_{i_l})$. We embed $C_{km}$ in $M_k(\Omega_{km})$
by $f(Y_{km})\to f(Y_{km})I_k$, where $I_k$ is the identity matrix.

\noindent $T_{km}$ -- the mixed trace algebra. This is the subalgebra
of $M_k(\Omega_{km})$ generated by $R_{km}$ and $C_{km}$.

For a background on generic matrices see e.g., \cite{P2} or \cite{DFo}.
Below we summarize the results we need.

\begin{proposition}\label{properties of generic matrices}
Let $k,m\geq 2$. Then:

{\rm (i)} The mixed trace algebra $T_{km}$ has no zero divisors;

{\rm (ii)} The pure trace algebra $C_{km}$ coincides with
the center of $T_{km}$. It is finitely generated
and $T_{km}$ is a finitely generated $C_{km}$-module;

{\rm (iii) \cite{Ki,P1}}
\[
\text{\rm GKdim}(T_{km})=\text{\rm GKdim}(C_{km}) =\text{\rm GKdim}(R_{km}) =k^2(m-1)+1.
\]
\end{proposition}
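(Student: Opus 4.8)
The plan is to treat all three algebras through the action of $GL_k$ on $\Omega_{km}$ by simultaneous conjugation of the generic matrices $y_1,\dots,y_m$, extended diagonally to $M_k(\Omega_{km})$, and to locate everything inside the generic (universal) division algebra. First I would settle (i). Passing to the field of fractions $Q$ of the polynomial domain $\Omega_{km}$, the algebra $R_{km}$ generates over its centre the generic division algebra $D$, a central simple algebra of degree $k$ over its centre $Z$ (Amitsur). The normalized traces $\tfrac{1}{k}\text{tr}(y_{i_1}\cdots y_{i_l})$ are reduced traces of elements of $R_{km}\subseteq D$ and therefore lie in $Z$; hence $C_{km}\subseteq Z$, and since $T_{km}$ is generated by $R_{km}$ and $C_{km}$ we get $T_{km}\subseteq D$. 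A subring of a division ring has no zero divisors, which gives (i).

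For (ii) I would invoke the first fundamental theorem of invariant theory for matrices, due to Procesi and Razmyslov: the ring of invariants $\Omega_{km}^{GL_k}$ is precisely the pure trace algebra $C_{km}$, and the ring of equivariant polynomial maps (matrix concomitants) $M_k(\Omega_{km})^{GL_k}$ is precisely the mixed trace algebra $T_{km}$. An element of $T_{km}$ that is central must, viewed as a concomitant, take values in the centre of $M_k$, i.e.\ be a scalar matrix $fI_k$ with $f$ invariant; thus $f\in C_{km}$ and $Z(T_{km})=C_{km}$. Finite generation of $C_{km}$ is Hilbert's theorem for the reductive group $GL_k$ in characteristic $0$, and since $M_k(\Omega_{km})$ is a free $\Omega_{km}$-module of rank $k^2$, its module of covariants $T_{km}$ is a finitely generated $C_{km}$-module by the same reductivity argument.

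Finally, for (iii) I would compute the GK-dimension on the commutative side and propagate it. The algebra $C_{km}$ is a finitely generated commutative domain, so its GK-dimension equals its Krull dimension, that is, the transcendence degree of its field of fractions, which is the dimension of the quotient variety $(M_k(K))^m/\!/GL_k$. For $k,m\ge 2$ a generic $m$-tuple of matrices generates $M_k(K)$, so its $GL_k$-stabilizer consists only of the scalars and the generic orbit has dimension $k^2-1$; hence $\dim\bigl((M_k(K))^m/\!/GL_k\bigr)=k^2m-(k^2-1)=k^2(m-1)+1$. By part (ii) together with Corollary \ref{GKdim of commutative algebras}, $\text{GKdim}(T_{km})=\text{GKdim}(C_{km})=k^2(m-1)+1$; and since $R_{km}$ and $T_{km}$ are orders in the same division algebra $D$ (equivalently, $C_{km}$ is integral over the centre of $R_{km}$ with the same field of fractions), $\text{GKdim}(R_{km})$ equals the same number.

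I expect the main obstacle to be part (ii): identifying all matrix invariants and concomitants with traces, and pinning down the centre of $T_{km}$ as exactly $C_{km}$, rests on the theorems of Procesi \cite{P1,P2} and Razmyslov \cite{R} on trace identities, which I would cite rather than reprove. Once these are in hand, the dimension count for (iii) (attributed to Kirillov and Procesi \cite{Ki,P1}) and the transfer of GK-dimension through module-finiteness are routine.
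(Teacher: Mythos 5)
The paper offers nothing to compare against here: Proposition \ref{properties of generic matrices} is stated as a summary of classical facts, with (iii) attributed to \cite{Ki,P1} and the reader referred to \cite{P2,DFo} for background, and no proof is given. What you have done is reconstruct the standard proofs from that literature, and your reconstruction is correct in outline: (i) follows from the embedding $T_{km}\subseteq D$ into Amitsur's generic division algebra, since reduced traces of elements of $R_{km}$ lie in the centre of $D$ (one slip: the reduced trace here is $\text{\rm tr}$ itself, not $\frac{1}{k}\text{\rm tr}$ --- harmless, since the centre is closed under scalar multiples); (ii) is Procesi's first fundamental theorem ($C_{km}=\Omega_{km}^{GL_k}$ and $T_{km}=M_k(\Omega_{km})^{GL_k}$) combined with the fact that the centralizer of the generic matrices in $M_k(Q)$ consists of scalar matrices, plus Hilbert--Nagata finiteness for the reductive group $GL_k$ in characteristic $0$; and the orbit count in (iii) correctly gives $k^2m-(k^2-1)=k^2(m-1)+1$, though it tacitly uses that the generic orbit is closed (true: a generating $m$-tuple defines an irreducible, hence semisimple, representation, and such orbits are closed by Artin's theorem).

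The one step you should not present as routine is the final equality $\text{\rm GKdim}(R_{km})=\text{\rm GKdim}(T_{km})$. Two affine orders in the same division algebra do not automatically have equal GK-dimension, and your parenthetical ``equivalently, $C_{km}$ is integral over the centre of $R_{km}$'' is itself a nontrivial theorem, not a reformulation of the order property. What makes this step legitimate is Schelter's integrality theorem (in characteristic $0$ the trace ring $T_{km}$ is a finitely generated $R_{km}$-module), after which the usual module-finiteness transfer of GK-dimension applies, or alternatively Procesi's theorem \cite{P1} that the GK-dimension of a prime affine PI-algebra equals the transcendence degree of the centre of its ring of quotients. Relatedly, your appeal to Corollary \ref{GKdim of commutative algebras} to get $\text{\rm GKdim}(T_{km})=\text{\rm GKdim}(C_{km})$ is misdirected: that corollary converts GK-dimension into a pole multiplicity of the Hilbert series, but it does not by itself equate the two dimensions; what you actually need is the standard fact (see \cite{KL}) that GK-dimension is preserved in passing from a finitely generated graded central subalgebra to a finitely generated module over it. With those two points made explicit (and cited rather than reproved, exactly as the paper does), your argument is a complete and faithful reconstruction of the results the paper quotes.
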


Further, we consider the generic traceless $k\times k$ matrices
\[
z_i=(z_{pq}^{(i)})=y_i-\frac{1}{k}\text{tr}(y_i)I_k,\quad i=1,\ldots,m,
\]
and the subalgebra $W_{km}$ of $T_{km}$
generated by $z_1,\ldots,z_m$, the subalgebra $C_{km}^{(0)}$ of
$C_{km}$ generated by the traces of the products,
$\text{tr}(z_{i_1}\cdots z_{i_l})$, and the subalgebra $T_{km}^{(0)}$
of $T_{km}$ generated by $W_{km}$ and $C_{km}^{(0)}$. Finally, let
$L_{km}$ be the Lie subalgebra of $W_{km}$ generated by $z_1,\ldots,z_m$.

\begin{proposition}\label{properties of traceless matrices}
Let $k,m\geq 2$. Then

{\rm (i) (Procesi \cite{P3})}
\[
T_{km}\cong K[\text{\rm tr}(y_1),\ldots,\text{\rm tr}(y_m)]
\otimes_KT_{km}^{(0)},
\]
\[
C_{km}\cong K[\text{\rm tr}(y_1),\ldots,\text{\rm tr}(y_m)]
\otimes_KC_{km}^{(0)};
\]

{\rm (ii) (Razmyslov \cite{R})}
\[
W_{km}\cong K\langle x_1,\ldots,x_m\rangle/\text{\rm Id}(M_k(K),sl_k(K))
\]
where $\text{\rm Id}(M_k(K),sl_k(K))$ is the ideal of all
weak polynomial identities in $m$ variables
for the pair $(M_k(K),sl_k(K))$, i.e., the polynomials
in the free associative algebra $K\langle x_1,\ldots,x_m\rangle$
which vanish when evaluated on $sl_k(K)$ considered as a subspace in
$M_k(K)$.

{\rm (iii) (Razmyslov \cite{R})} The Lie algebra $L_{km}$ is isomorphic to
the relatively free algebra $F_m(\text{\rm var}(sl_k)(K))$ in the
variety of Lie algebras generated by $sl_k(K)$.
\end{proposition}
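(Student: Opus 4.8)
The plan is to treat the three parts in order, deducing (iii) from (ii) at the end. For part (i), I would exploit the decomposition of the generic matrices themselves: by definition $y_i = z_i + \frac{1}{k}\,\mathrm{tr}(y_i)I_k$, and setting $t_i = \mathrm{tr}(y_i) = \sum_p y_{pp}^{(i)}$ gives a linear, invertible change of the diagonal coordinates of $\Omega_{km}$, so that $\Omega_{km} = K[t_1,\dots,t_m]\otimes_K \Omega_{km}^{(0)}$, where $\Omega_{km}^{(0)}$ is the polynomial ring generated by the entries of the $z_i$ (equivalently, by the off-diagonal $y_{pq}^{(i)}$ together with the traceless diagonal combinations). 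Every generator $\mathrm{tr}(y_{i_1}\cdots y_{i_l})$ of $C_{km}$ expands, upon substituting $y_i = z_i + \frac{1}{k}t_i I_k$, into a polynomial in the $t_j$ with coefficients that are traces of products of the $z$'s; likewise every product $y_{i_1}\cdots y_{i_l}$ expands into a combination of $T_{km}^{(0)}$-elements times monomials in the $t_j$. Hence $T_{km}$ and $C_{km}$ are generated by $T_{km}^{(0)}$ (resp. $C_{km}^{(0)}$) together with the central elements $t_1,\dots,t_m$. To upgrade ``generated by'' to a genuine tensor decomposition I would verify that the $t_i$ are algebraically independent over $T_{km}^{(0)}$: since $T_{km}^{(0)}\subseteq M_k(\Omega_{km}^{(0)})$ involves no diagonal-trace variable, while $t_1,\dots,t_m$ are independent transcendentals over $\Omega_{km}^{(0)}$, the multiplication map $K[t_1,\dots,t_m]\otimes_K T_{km}^{(0)}\to T_{km}$ is injective, hence an isomorphism, and similarly for the centers.

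For part (ii), I would consider the surjective algebra homomorphism $\varphi\colon K\langle x_1,\dots,x_m\rangle \to W_{km}$ defined by $x_i\mapsto z_i$; by the definition of $W_{km}$ it is onto, so it suffices to identify $\ker\varphi$ with $\mathrm{Id}(M_k(K),sl_k(K))$. A polynomial $f$ lies in $\ker\varphi$ precisely when the matrix $f(z_1,\dots,z_m)\in M_k(\Omega_{km})$ vanishes, i.e. all its entries are zero as polynomials in the $z_{pq}^{(i)}$. Here the genericity principle enters: because $K$ has characteristic $0$ (so is infinite) and the entries of the traceless generic matrices are, subject only to the relations $\sum_p z_{pp}^{(i)}=0$, a family of algebraically independent coordinates, $f(z_1,\dots,z_m)=0$ holds identically if and only if $f(a_1,\dots,a_m)=0$ for every specialization of the $z_i$ to traceless matrices $a_i\in sl_k(K)$ (and over every commutative extension of $K$, by the usual transcendence argument). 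That is exactly the statement that $f$ is a weak polynomial identity for the pair $(M_k(K),sl_k(K))$, so $\ker\varphi=\mathrm{Id}(M_k(K),sl_k(K))$ and the isomorphism follows.

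Part (iii) I would then obtain by restriction. Composing the embedding $L(x_1,\dots,x_m)\hookrightarrow K\langle x_1,\dots,x_m\rangle$ of the free Lie algebra with $\varphi$ yields a Lie homomorphism onto the Lie subalgebra of $W_{km}$ generated by the $z_i$, namely $L_{km}$, whence $L_{km}\cong L(x_1,\dots,x_m)/\bigl(L(x_1,\dots,x_m)\cap \mathrm{Id}(M_k(K),sl_k(K))\bigr)$. It remains to recognize this intersection as the T-ideal $\mathrm{Id}_{\mathrm{Lie}}(sl_k(K))$ of Lie identities of $sl_k(K)$. This is the one point where the traceless setting is essential and matches up perfectly: since a commutator $[a,b]=ab-ba$ of matrices is always traceless, any Lie polynomial evaluated on traceless matrices stays inside $sl_k(K)$, and this associative evaluation coincides with evaluating the same bracket expression inside the Lie algebra $sl_k(K)$. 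Consequently a Lie polynomial is a weak identity of $(M_k(K),sl_k(K))$ if and only if it vanishes on $sl_k(K)$ as a Lie algebra, so the intersection equals $\mathrm{Id}_{\mathrm{Lie}}(sl_k(K))$, giving $L_{km}\cong F_m(\mathrm{var}(sl_k(K)))$.

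The main obstacle will be the genericity step in part (ii): one must argue rigorously that vanishing on the single universal traceless substitution $(z_1,\dots,z_m)$ is equivalent to vanishing on all traceless substitutions over all extensions of $K$, handling the defining relations $\sum_p z_{pp}^{(i)}=0$ carefully so that the genuinely free coordinates are precisely the independent transcendentals. Once this universality is established, parts (i) and (iii) reduce to bookkeeping, namely an invertible change of coordinates and a compatibility check between the associative and Lie evaluations.
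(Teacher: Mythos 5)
Your proposal is correct, but it is worth noting that the paper itself offers no proof of this proposition at all: it is stated as a summary of known results, with (i) attributed to Procesi \cite{P3} and (ii), (iii) to Razmyslov \cite{R}, and the reader is referred to \cite{P2} and \cite{DFo} for background. What you have done instead is reconstruct direct, self-contained arguments, and they hold up. Part (i) is exactly the elementary linear change of coordinates $y_i=z_i+\frac{1}{k}\text{\rm tr}(y_i)I_k$, giving $\Omega_{km}=K[t_1,\ldots,t_m]\otimes_K\Omega_{km}^{(0)}$; your injectivity argument for the multiplication map (read off entrywise, since elements of $T_{km}^{(0)}$ have entries in $\Omega_{km}^{(0)}$ and the $t_i$ are transcendental over it) is the right way to upgrade generation to a tensor decomposition, and centrality of the scalar matrices $t_iI_k$ makes the map an algebra morphism. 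Part (ii) is the standard genericity principle: over an infinite field, vanishing of the polynomial entries of $f(z_1,\ldots,z_m)$ is equivalent to vanishing under every specialization into $sl_k(K)$, once one takes the off-diagonal entries and $k-1$ diagonal entries per matrix as free coordinates; this identifies $\ker\varphi$ with the weak identities as defined in the statement. Part (iii) follows by restricting $\varphi$ to the free Lie algebra and observing that for Lie polynomials the associative evaluation at traceless matrices coincides with the Lie evaluation inside $sl_k(K)$, so the kernel of the restriction is precisely the ideal of Lie identities of $sl_k(K)$ in $m$ variables, whence $L_{km}\cong F_m(\text{\rm var}(sl_k(K)))$. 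The practical difference between your route and the paper's is that your arguments show these isomorphisms are essentially formal consequences of genericity and do not require the deep content of the cited works (Razmyslov's theorem proper concerns the finite basis property of the weak identities, which is never needed here), whereas the paper's citations situate the statements in the invariant-theoretic literature; for the purposes of the main theorem either treatment suffices.
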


\begin{corollary}\label{GKdim of mixed algebra generated by traceless
matrices}
For $k,m\geq 2$
\[
\text{\rm GKdim}(T_{km}^{(0)})=\text{\rm GKdim}(C_{km}^{(0)})=(k^2-1)(m-1).
\]
\end{corollary}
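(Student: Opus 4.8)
The plan is to read off $\text{\rm GKdim}(C_{km}^{(0)})$ and $\text{\rm GKdim}(T_{km}^{(0)})$ from the tensor decompositions in Proposition \ref{properties of traceless matrices}(i) by comparing the orders of the pole at $t=1$ of the relevant Hilbert series. First I would record that the traces $\text{\rm tr}(y_1),\ldots,\text{\rm tr}(y_m)$ are algebraically independent homogeneous elements of degree $1$, so that the polynomial subalgebra $P=K[\text{\rm tr}(y_1),\ldots,\text{\rm tr}(y_m)]$ has Hilbert series $H(P,t)=(1-t)^{-m}$, whose only pole is at $t=1$ and has multiplicity $m$.

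Next I would observe that the generic traceless matrices $z_i=y_i-k^{-1}\text{\rm tr}(y_i)I_k$ are again homogeneous of degree $1$; hence $W_{km}$, $C_{km}^{(0)}$ and $T_{km}^{(0)}$ are graded subalgebras, and the Procesi isomorphisms of Proposition \ref{properties of traceless matrices}(i) are isomorphisms of \emph{graded} algebras once the tensor products are given the total grading. Consequently the Hilbert series factor as
\[
H(C_{km},t)=H(P,t)\,H(C_{km}^{(0)},t),\qquad
H(T_{km},t)=H(P,t)\,H(T_{km}^{(0)},t).
\]

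Then I would invoke Corollary \ref{GKdim of commutative algebras} to convert all four Gelfand-Kirillov dimensions into pole multiplicities at $t=1$. For $C_{km}$ and $C_{km}^{(0)}$ this is immediate, since they are finitely generated commutative graded algebras, finite generation of $C_{km}^{(0)}$ being inherited from that of $C_{km}$ via the decomposition. For $T_{km}$ and $T_{km}^{(0)}$ I would verify the hypotheses of the corollary: that $C_{km}^{(0)}$ is a finitely generated subalgebra of the center of $T_{km}^{(0)}$ and that $T_{km}^{(0)}$ is a finitely generated $C_{km}^{(0)}$-module, both of which descend from the corresponding statements for $T_{km}$ over $C_{km}$ in Proposition \ref{properties of generic matrices}(ii) through the tensor decomposition. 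Granting this, each Gelfand-Kirillov dimension equals the order of the pole at $t=1$ of its Hilbert series, and since pole orders add under multiplication of Hilbert series, I obtain
\[
\text{\rm GKdim}(C_{km}^{(0)})=\text{\rm GKdim}(C_{km})-m,\qquad
\text{\rm GKdim}(T_{km}^{(0)})=\text{\rm GKdim}(T_{km})-m.
\]
Substituting $\text{\rm GKdim}(C_{km})=\text{\rm GKdim}(T_{km})=k^2(m-1)+1$ from Proposition \ref{properties of generic matrices}(iii) and simplifying $k^2(m-1)+1-m=(k^2-1)(m-1)$ yields the claim.

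I expect the main obstacle to be the structural bookkeeping: justifying that the Procesi decomposition is genuinely a graded isomorphism, and that the center and finite-module hypotheses of Corollary \ref{GKdim of commutative algebras} really do pass from $T_{km}$ to $T_{km}^{(0)}$. Once these points are secured, the extraction of pole orders and the final arithmetic are entirely routine.
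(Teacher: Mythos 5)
Your proposal is correct and follows essentially the same route as the paper: both factor the Hilbert series via Procesi's tensor decomposition from Proposition \ref{properties of traceless matrices}(i), convert Gelfand-Kirillov dimensions to pole multiplicities at $t=1$ via Corollary \ref{GKdim of commutative algebras}, and finish with the arithmetic $k^2(m-1)+1-m=(k^2-1)(m-1)$. Your explicit verification that the hypotheses of Corollary \ref{GKdim of commutative algebras} descend from $T_{km}$ over $C_{km}$ to $T_{km}^{(0)}$ over $C_{km}^{(0)}$ is a point the paper treats more tersely, but it is the same argument.
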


\begin{proof}
The algebras $T_{km}$ and $C_{km}$ satisfy the conditions of Corollary
\ref{GKdim of commutative algebras}. Hence the multiplicity of 1 as a pole
of the Hilbert series of $T_{km}$ and $C_{km}$ is equal to their
Gelfand-Kirillov dimension $k^2(m-1)+1$ (see  Proposition
\ref{properties of generic matrices} (iii)).
Proposition \ref{properties of traceless matrices} (i) gives that
\[
H(T_{km},t)=H(K[\text{tr}(y_1),\ldots,\text{tr}(y_m)],t)H(T_{km}^{(0)},t)
=\frac{1}{(1-t)^m}H(T_{km}^{(0)},t),
\]
\[
H(C_{km},t)=\frac{1}{(1-t)^m}H(C_{km}^{(0)},t).
\]
Hence the multiplicity of 1 as a pole of $H(T_{km}^{(0)},t)$
and $H(C_{km}^{(0)},t)$ is equal to $(k^2(m-1)+1)-m=(k^2-1)(m-1)$.
Both algebras $T_{km}^{(0)}$ and $C_{km}^{(0)}$ are finitely generated
and graded. Hence the proof follows from Corollary
\ref{GKdim of commutative algebras}.
\end{proof}

Now we shall summarize the information for $2\times 2$ generic matrices.

\begin{proposition}\label{generic 2 x 2 matrices}
Let $k=2$ and $m\geq 2$. Then:

{\rm (i) (Sibirskii \cite{Si})}
The trace polynomials
\[
\text{\rm tr}(y_i),\, i = 1,\ldots,m,\quad
\text{\rm tr}(y_iy_j),\, 1 \leq i\leq j \leq m,
\]
\[
\text{\rm tr}(y_{i_1}y_{i_2}y_{i_3}),\, 1 \leq i_1<i_2<i_3\leq m,
\]
form a minimal system of generators of $C_{2m}$.

{\rm (ii) (Procesi \cite{P3})} The algebras $T_{2m}^{(0)}$ and
$W_{2m}$ coincide. The algebra $C_{2m}^{(0)}$ is generated by
\[
\text{\rm tr}(z_iz_j),\, 1 \leq i\leq j \leq m,\quad
\text{\rm tr}(z_{i_1}z_{i_2}z_{i_3}),\, 1 \leq i_1<i_2<i_3\leq m,
\]
which belong to $W_{2m}$.

{\rm (iii) (Drensky \cite{D3})} The algebra $C_{2m}^{(0)}$ is generated by
\[
z_i^2,\,i=1,\ldots,m,\quad z_iz_j+z_jz_i,\,1 \leq i\leq j \leq m,
\]
\[
s_3(z_{i_1},z_{i_2},z_{i_3}),\, 1 \leq i_1<i_2<i_3\leq m,
\]
where
\[
s_3(x_1,x_2,x_3)=\sum_{\sigma\in S_3}\text{\rm sign}(\sigma)
x_{\sigma(1)}x_{\sigma(2)}x_{\sigma(3)}
\]
is the standard polynomial of degree $3$.
\end{proposition}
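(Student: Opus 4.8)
The plan is to handle the three parts in turn, deriving everything from the first fundamental theorem of matrix invariants together with the low-degree trace identities special to the $2\times 2$ case. For part (i) I would start from the first fundamental theorem, which guarantees that $C_{2m}$ is generated by the traces $\text{\rm tr}(y_{i_1}\cdots y_{i_l})$ of all products of the generic matrices, and then cut this infinite set down to the stated list. The Cayley--Hamilton theorem for $2\times 2$ matrices, $y^2-\text{\rm tr}(y)y+\det(y)I=0$ with $\det(y)=\tfrac12(\text{\rm tr}(y)^2-\text{\rm tr}(y^2))$, yields the fundamental trace identity
\[
\text{\rm tr}(abc)+\text{\rm tr}(acb)=\text{\rm tr}(a)\text{\rm tr}(bc)+\text{\rm tr}(b)\text{\rm tr}(ac)+\text{\rm tr}(c)\text{\rm tr}(ab)-\text{\rm tr}(a)\text{\rm tr}(b)\text{\rm tr}(c),
\]
and repeated use of it expresses every trace of a product of length $\ge 4$, as well as the symmetric part of each cubic trace, through traces of length $\le 2$ and the antisymmetric cubic traces. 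A cubic trace with a repeated index has vanishing antisymmetric part, so only the triples $i_1<i_2<i_3$ survive. Minimality I would obtain by a multihomogeneous (Hilbert-series) count, verifying that no listed generator lies in the subalgebra generated by the remaining ones of degree at most its own.

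For part (ii) the decisive observation is that for traceless $2\times 2$ matrices the anticommutator is a scalar: polarizing $z^2=\tfrac12\text{\rm tr}(z^2)I$ gives
\[
z_iz_j+z_jz_i=\text{\rm tr}(z_iz_j)I,
\]
so $\text{\rm tr}(z_iz_j)$, viewed as a scalar matrix, already lies in $W_{2m}$. For the cubic generators I would anticipate the identity of part (iii), by which $s_3(z_{i_1},z_{i_2},z_{i_3})$ is a scalar multiple of $\text{\rm tr}(z_{i_1}z_{i_2}z_{i_3})I$ and manifestly lies in $W_{2m}$; together with the reduction of the symmetric part via the trace identity above, this puts $\text{\rm tr}(z_{i_1}z_{i_2}z_{i_3})I$ into $W_{2m}$ as well. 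Hence $C_{2m}^{(0)}\subseteq W_{2m}$, and since $T_{2m}^{(0)}$ is generated by $W_{2m}$ and $C_{2m}^{(0)}$, the two algebras coincide.

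Part (iii) is the computational heart. Writing each associative product of two traceless matrices as $zw=\tfrac12\text{\rm tr}(zw)I+\tfrac12[z,w]$, I would expand a triple product $z_1z_2z_3$ into a scalar term, a term linear in a single $z_k$, and a double-commutator term, and antisymmetrize over $S_3$. The piece $\tfrac12\text{\rm tr}(z_az_b)z_c$ cancels, since $\text{\rm tr}(z_az_b)$ is symmetric in $a,b$ while the sum is alternating; the scalar piece survives and assembles into $\tfrac32\text{\rm tr}([z_1,z_2]z_3)I$ because $\text{\rm tr}([z_a,z_b]z_c)$ is totally antisymmetric; and the double-commutator piece $\tfrac14[[z_1,z_2],z_3]$ antisymmetrizes to zero by the Jacobi identity. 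This gives $s_3(z_1,z_2,z_3)=\tfrac32\text{\rm tr}([z_1,z_2]z_3)I$, a scalar matrix proportional to $\text{\rm tr}(z_1z_2z_3)I$. Combined with $z_i^2=\tfrac12\text{\rm tr}(z_i^2)I$ and the anticommutator identity, the generating set of (iii) is, over a field of characteristic $0$, equivalent to that of (ii).

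The main obstacle I anticipate is precisely the cubic computation in (iii): tracking the three pieces under the full $S_3$-antisymmetrization and recognizing the Jacobi cancellation of the double-commutator term. Everything else rests on the two scalar identities for anticommutators and on the standard reduction of high-degree traces, while the passage between the two generating sets uses that characteristic $0$ lets one invert the numerical constants.
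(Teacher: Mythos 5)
Your proposal is correct and follows essentially the same route as the paper: both arguments reduce (ii) and (iii) to the observation that $C_{2m}^{(0)}$ is generated by the quadratic and antisymmetric cubic traces, and then use the identities $\text{\rm tr}(z_1^2)=2z_1^2$, $\text{\rm tr}(z_1z_2)=z_1z_2+z_2z_1$, and $s_3(z_1,z_2,z_3)=3\,\text{\rm tr}(z_1z_2z_3)$ (your $\frac{3}{2}\text{\rm tr}([z_1,z_2]z_3)I$ is exactly this, since $\text{\rm tr}([z_1,z_2]z_3)=2\,\text{\rm tr}(z_1z_2z_3)$ for traceless $2\times 2$ matrices) to place these generators inside $W_{2m}$; the paper states these identities ``may be checked by direct verification,'' whereas you actually carry out the verification. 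The only divergence is part (i), which the paper simply cites to Sibirskii, while your sketch derives it from the first fundamental theorem of matrix invariants --- which is essentially the cited result itself --- so this does not constitute a genuinely different method.
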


\begin{proof} We shall present the proof of (ii) and (iii)
as a consequence of (i). Clearly $C_{2m}^{(0)}$ is generated by
$\text{\rm tr}(z_iz_j)$, $1 \leq i\leq j \leq m$, and
$\text{\rm tr}(z_{i_1}z_{i_2}z_{i_3})$, $1\leq i_1<i_2<i_3\leq m$.
Now the proof of (ii) and (iii) follows immediately from the equalities
in $T_{2m}^{(0)}$
\[
\text{\rm tr}(z_1^2)=2z_1^2,\quad
\text{\rm tr}(z_1z_2)=z_1z_2+z_2z_1,
\]
\[
\text{\rm tr}(z_1z_2z_3)=\frac{1}{3}s_3(z_1,z_2,z_3)
\]
which may be checked by direct verification.
\end{proof}

\begin{lemma}\label{the commutator ideal is a module}
The commutator ideal $L'_{2m}$ is a $C_{2m}^{(0)}$-module.
\end{lemma}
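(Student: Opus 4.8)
The plan is to exploit that $C^{(0)}_{2m}$ lies in the center of $W_{2m}=T^{(0)}_{2m}$ — this is precisely what the identities $z_i^2=\tfrac12\mathrm{tr}(z_i^2)I$, $z_iz_j+z_jz_i=\mathrm{tr}(z_iz_j)I$, $s_3(z_i,z_j,z_k)=3\,\mathrm{tr}(z_iz_jz_k)I$ of Proposition~\ref{generic 2 x 2 matrices} say — so that multiplication by $c\in C^{(0)}_{2m}$ commutes with brackets: $c[a,b]=[ca,b]$ for $a,b\in W_{2m}$. Since $C^{(0)}_{2m}$ is generated as an algebra by the elements in Proposition~\ref{generic 2 x 2 matrices}(iii), and since $z_iz_j+z_jz_i=(z_i+z_j)^2-z_i^2-z_j^2$, I would first reduce to showing $g\cdot L'_{2m}\subseteq L'_{2m}$ for $g$ running over the squares $h^2$ (with $h$ a linear combination of the $z_i$) and over $s_3(z_i,z_j,z_k)$; the general element of $C^{(0)}_{2m}$ is then handled by peeling off one such factor at a time.

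For a fixed central generator $g$ I would induct on the degree of a homogeneous $w\in L'_{2m}$. Because $L_{2m}$ is generated in degree $1$, for $\deg w\ge 3$ one may write $w$ as a sum of right-normed brackets $[p,w']$ with $p\in\mathrm{span}(z_1,\dots,z_m)$ and $w'\in L'_{2m}$ of strictly smaller degree; centrality of $g$ gives $g[p,w']=[p,gw']$, and $gw'\in L'_{2m}$ by the inductive hypothesis, whence $gw=[p,gw']\in[L_{2m},L_{2m}]=L'_{2m}$. This formal step reduces everything to the base case $\deg w=2$, i.e. to proving $g\cdot[u,v]\in L'_{2m}$ for $u,v\in\mathrm{span}(z_1,\dots,z_m)$.

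The base case is where the $2\times 2$ structure genuinely enters, and I expect it to be the main obstacle: it is delicate because a central element times a single generator need not be a Lie element (for instance $z_i^2z_j\notin L'_{2m}$), so the corrections produced by Cayley--Hamilton must be shown to reassemble into $L'_{2m}$. For $g=h^2$ I would expand the commutators $[h,huv]$ and $[v,huh]$ in two ways — directly, moving factors past one another by $xy+yx=\mathrm{tr}(xy)I$, and after reducing the length-three monomial via $abc=\tfrac12\mathrm{tr}(abc)I+\tfrac12\mathrm{tr}(ab)\,c+\tfrac14[[a,b],c]$ — and compare: the scalar terms cancel and one is left with $h^2[u,v]=\tfrac12\mathrm{tr}(hu)[h,v]+\tfrac14[v,[[h,u],h]]$ together with the auxiliary identity $\mathrm{tr}(vh)[h,u]=-\tfrac12[h,[[h,u],v]]$, both visibly in $L'_{2m}$. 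The generator $s_3$ is then deduced from the $h^2$ case: first $\mathrm{tr}(z_iz_jz_k)\,p\in L'_{2m}$ follows from $a^2[b,c]=\tfrac14[a,[a,[b,c]]]+\mathrm{tr}(abc)\,a$ and from the fact (seen by grouping $abcp=(ab)(cp)$) that the traceless part of a length-four monomial lies in $L'_{2m}$; and then $s_3[u,v]=(s_3u)v-(s_3v)u\in L'_{2m}$, because $s_3u,s_3v\in L'_{2m}$ and the residual scalars cancel by $\mathrm{tr}(uv)=\mathrm{tr}(vu)$. With these base identities in hand the induction closes and $L'_{2m}$ is a $C^{(0)}_{2m}$-module.
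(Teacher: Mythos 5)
Your proposal is correct and takes essentially the same route as the paper: both proofs rest on the centrality of $C_{2m}^{(0)}$ in $W_{2m}$, which lets one push a generator of $C_{2m}^{(0)}$ onto the innermost degree-two part of a left-normed commutator, plus explicit $2\times 2$ traceless-matrix identities showing that (generator)$\cdot$(bracket) is again a Lie element. The paper just writes down three ready-made identities (for $[z_1,z_2]z_3^2$, $[z_1,z_2](z_3z_4+z_4z_3)$, and $z_4s_3(z_1,z_2,z_3)$) and avoids your induction and the longer derivation chain for the $s_3$ generator, but your identities (which I checked) are valid and your reductions close correctly.
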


\begin{proof}
The following equalities which can be verified directly
hold in $W_{2m}$:
\[
[z_1,z_2]z_3^2=\frac{1}{4}([z_1,z_2,z_3,z_3]-[[z_1,z_3],[z_2,z_3]]),
\]
\[
[z_1,z_2](z_3z_4+z_4z_3)=\frac{1}{4}([z_1,z_2,z_3,z_4]+[z_1,z_2,z_4,z_3]
\]
\[
-[[z_1,z_3],[z_2,z_4]]-[[z_1,z_4],[z_2,z_3]]),
\]
\[
z_4s_3(z_1,z_2,z_3)=\frac{3}{8}\sum_{\sigma\in S_3}\text{sign}(\sigma)
[z_4,z_{\sigma(1)},z_{\sigma(2)},z_{\sigma(3)}].
\]
The elements of the commutator ideal are linear combinations of
(left normed) commutators $u_i=[z_{i_1},z_{i_2},\ldots,z_{i_n}]$. If
$v$ is a generator of $C_{2m}^{(0)}$, then
\[
u_iv=[z_{i_1},z_{i_2},\ldots,z_{i_n}]v=[[z_{i_1},z_{i_2}]v,\ldots,z_{i_n}]
\]
and the above equalities guarantee that $u_iv$ is a linear combination
of commutators, i.e., belongs to $L'_{2m}$ again. Hence
$L'_{2m}C_{2m}^{(0)}\subset L'_{2m}$.
\end{proof}

\begin{remark}
It is known that $W_{2m}$ is a $C_{2m}^{(0)}$-module generated by
$1$, $z_i$, $i=1,\ldots,m$, and $[z_i,z_j]$, $1\leq i<j\leq m$.
Using the equality
\[
[z_1,z_2,z_3]=2(z_1(z_2z_3+z_3z_2)-z_2(z_1z_3+z_3z_1)),
\]
as in the proof of Lemma \ref{the commutator ideal is a module}
we can show that $L'_{2m}$ is a $C_{2m}^{(0)}$-module generated by
all commutators $[z_i,z_j]$ and $[z_{i_1},z_{i_2},z_{i_3}]$.
For $m=2$, the commutator ideal $L'_{22}$ is a free
$C_{22}^{(0)}$-module generated by $[z_1,z_2]$, $[z_1,z_2,z_1]$, $[z_1,z_2,z_2]$,
see \cite{DFi}.
\end{remark}

The proof of the following theorem established in \cite{MK}
is the main result of our paper.

\begin{theorem}
Let $K$ be a field of characteristic $0$ and let $L_{2m}$ be the Lie
algebra generated by $m$ generic traceless $2\times 2$ matrices,
$m\geq 2$. Then
\[
\text{\rm GKdim}(L_{2m})
=\text{\rm GKdim}(F_m(\text{\rm var}(sl_2(K)))=3(m-1).
\]
\end{theorem}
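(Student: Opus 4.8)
The plan is to combine the module structure furnished by Lemma~\ref{the commutator ideal is a module} with the absence of zero divisors in the generic matrix algebra, so that the commutator ideal $L'_{2m}$ inherits the full Gelfand-Kirillov dimension of the central subalgebra $C_{2m}^{(0)}$. First I would record that by Proposition~\ref{properties of traceless matrices}~(iii) we have $L_{2m}\cong F_m(\mathrm{var}(sl_2(K)))$, so it suffices to compute $\mathrm{GKdim}(L_{2m})$. Since $L_{2m}$ is graded and generated by the degree-one elements $z_1,\dots,z_m$, its component of degree $1$ is the $m$-dimensional space $\mathrm{span}(z_1,\dots,z_m)$, whereas every component of degree $\ge 2$ is spanned by commutators and hence lies in $L'_{2m}$. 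Thus $L_{2m}=\mathrm{span}(z_1,\dots,z_m)\oplus L'_{2m}$ as graded vector spaces and $H(L_{2m},t)=mt+H(L'_{2m},t)$; the two series differ by a polynomial, so they have the same order of pole at $t=1$ and consequently $\mathrm{GKdim}(L_{2m})=\mathrm{GKdim}(L'_{2m})$.

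Next I would fit $L'_{2m}$ into the setting of Corollary~\ref{GKdim of commutative algebras}. By Lemma~\ref{the commutator ideal is a module} it is a module over the finitely generated commutative graded algebra $C_{2m}^{(0)}$. Since $C_{2m}^{(0)}\subseteq W_{2m}\subseteq T_{2m}$ and $T_{2m}$ has no zero divisors by Proposition~\ref{properties of generic matrices}~(i), the algebra $C_{2m}^{(0)}$ is a Noetherian integral domain. As $W_{2m}$ is a finitely generated $C_{2m}^{(0)}$-module and $L'_{2m}\subseteq W_{2m}$ is a submodule, Noetherianity makes $L'_{2m}$ a finitely generated graded $C_{2m}^{(0)}$-module; by the Hilbert-Serre theorem its Hilbert series has the form required in Lemma~\ref{multiplicity of the pole}, so $\mathrm{GKdim}(L'_{2m})$ equals the order of the pole of $H(L'_{2m},t)$ at $t=1$.

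The crux is to show that this pole order is maximal, i.e.\ equal to the pole order of $C_{2m}^{(0)}$ itself. For this I would argue that $L'_{2m}$ is a \emph{faithful}, indeed torsion-free, $C_{2m}^{(0)}$-module: both $L'_{2m}$ and $C_{2m}^{(0)}$ lie in $W_{2m}$, and $C_{2m}^{(0)}$ is central there, so for nonzero $c\in C_{2m}^{(0)}$ and nonzero $u\in L'_{2m}$ the product $cu=uc$ is nonzero because $W_{2m}$ has no zero divisors. Hence the annihilator of the nonzero module $L'_{2m}$ vanishes, its support is all of $\mathrm{Spec}(C_{2m}^{(0)})$, and the order of the pole of $H(L'_{2m},t)$ at $t=1$ equals that of $H(C_{2m}^{(0)},t)$, which by Corollary~\ref{GKdim of mixed algebra generated by traceless matrices} (with $k=2$) is $\mathrm{GKdim}(C_{2m}^{(0)})=(2^2-1)(m-1)=3(m-1)$. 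Chaining the equalities yields $\mathrm{GKdim}(L_{2m})=3(m-1)$.

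I expect the faithfulness step to be the main obstacle: a priori $L'_{2m}$ might be a torsion module supported on a proper closed subset of $\mathrm{Spec}(C_{2m}^{(0)})$, in which case its pole order, and hence its Gelfand-Kirillov dimension, would drop below $\mathrm{GKdim}(C_{2m}^{(0)})$. It is precisely the integrality (no zero divisors) of the generic matrix algebra $W_{2m}$, inherited from $T_{2m}$, that forces full support. The remaining technical point, the finite generation of $L'_{2m}$ over $C_{2m}^{(0)}$, I would obtain either from the explicit module generators recorded in the Remark or, more conceptually, from the Noetherianity argument above.
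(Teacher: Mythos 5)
Your proposal is correct, and it rests on the same two pillars as the paper's own proof---Lemma~\ref{the commutator ideal is a module} and the fact that $W_{2m}$ has no zero divisors---but it processes them through noticeably heavier machinery. The paper argues by an elementary sandwich: since $[z_1,z_2]C_{2m}^{(0)}$ is a free rank-one $C_{2m}^{(0)}$-module contained in $L'_{2m}$ (your torsion-freeness observation, made concrete on a single element), the partial sums of the coefficients of $H(C_{2m}^{(0)},t)$ bound those of $H(L_{2m},t)$ from below, while the inclusion $L_{2m}\subset W_{2m}=T_{2m}^{(0)}$ bounds them from above; Corollary~\ref{GKdim of mixed algebra generated by traceless matrices} then pins both ends at $3(m-1)$. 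This requires neither finite generation of $L'_{2m}$ as a $C_{2m}^{(0)}$-module nor any support theory. You instead establish finite generation via Noetherianity (using that $W_{2m}$ is a finitely generated $C_{2m}^{(0)}$-module, a fact the paper only records in a Remark), apply Hilbert--Serre, and then invoke the theorem that for a finitely generated graded module the order of the pole of the Hilbert series at $t=1$ equals the Krull dimension of its support. That theorem is true, also for the non-standard grading of $C_{2m}^{(0)}$ (generators in degrees $2$ and $3$), but it is strictly stronger than anything the paper proves: Lemma~\ref{multiplicity of the pole} and Corollary~\ref{GKdim of commutative algebras} identify the pole order with the Gelfand--Kirillov dimension of an \emph{algebra}, not with the dimension of the support of a \emph{module}, so in a final write-up you would need to cite this (standard) commutative-algebra fact, e.g.\ from graded dimension theory, rather than treat it as available. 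In exchange, your support-theoretic formulation is more conceptual and would adapt more readily to generalizations, whereas the paper's sandwich is leaner and self-contained; note finally that your own faithfulness argument already hands you the paper's shortcut, since any single nonzero $u\in L'_{2m}$ makes $uC_{2m}^{(0)}$ a free module inside $L'_{2m}$, after which comparing coefficient sums finishes the proof with no mention of $\mathrm{Spec}$, annihilators, or finite generation of $L'_{2m}$.
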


\begin{proof}
Let
\[
H(C_{2m}^{(0)},t)=\sum_{n\geq 0}c_nt^n,\quad
H(L_{2m},t)=\sum_{n\geq 1}l_nt^n,\quad
H(W_{2m},t)=\sum_{n\geq 1}w_nt^n
\]
be the Hilbert series of $C_{2m}^{(0)}$, $L_{2m}$, and $W_{2m}$,
respectively.
Since the algebra $L_{2m}$ is finitely generated,
its Gelfand-Kirillov dimension is
\[
\text{\rm GKdim}(L_{2m})=\limsup_{n\to\infty}\log_n
\left(\sum_{k=1}^nl_k\right).
\]
The algebra $W_{2m}$ has no zero divisors and hence
$[z_1,z_2]C_{2m}^{(0)}\subset L'_{2m}\subset L_{2m}$
is a free $C_{2m}^{(0)}$-module. Therefore
\[
\sum_{k=0}^{n-2}c_k\leq \sum_{k=1}^nl_k\leq \sum_{k=0}^nw_k,
\]
which implies that
\[
3(m-1)=\text{\rm GKdim}(C^{(0)}_{2m})\leq \text{\rm GKdim}(L_{2m})
\leq \text{\rm GKdim}(W_{2m})=3(m-1).
\]
\end{proof}

\begin{remark}
As in \cite{MK}, the formula for the Gelfand-Kirillov dimension of
$F_m(\text{\rm var}(sl_2(K)))$ obtained in characteristic 0 holds
also for any infinite field $K$ of characteristic different from 2.
\end{remark}

\begin{remark}
In characteristic 2, the algebra $sl_2(K)$ is nilpotent of class 2
and hence $F_m(\text{\rm var}(sl_2(K)))$ is isomorphic to the free nilpotent
of class 2 Lie algebra $F_m({\mathfrak N}_2)$ which is finite dimensional.
Therefore $\text{\rm GKdim}(F_m(\text{\rm var}(sl_2(K))))=0$.
When $K$ is an infinite field of characteristic 2,
a much more interesting object is the relatively free algebra
$F_m(\text{\rm var}(M_2(K)^{(-)}))$ of the variety generated by the
$2\times 2$ matrix algebra $M_2(K)$ considered as a Lie algebra.
Vaughan-Lee \cite{VL} showed that the algebra $M_2(K)^{(-)}$ does not have
a finite basis of its polynomial identities. (It is easy to see that
the four-dimensional Lie algebra constructed in \cite{VL} is isomorphic to
$M_2(K)^{(-)}$.) The algebra $M_2(K)^{(-)}$ satisfies
the center-by-metabelian polynomial identity
\[
[[[x_1,x_2],[x_3,x_4]],x_5]=0.
\]
It is well known that the free center-by-metabelian Lie algebra
$F_m([{\mathfrak A}^2,{\mathfrak E}])$ over any field $K$
is spanned by
\[
[x_{i_1},x_{i_2},x_{i_3},\ldots,x_{i_n}],\quad
[[x_{i_1},x_{i_2},x_{i_3},\ldots,x_{i_n}],[x_{i_{n+1}},x_{i_{n+2}}]],
\]
where $i_1>i_2\leq i_3\leq \cdots\leq i_n$ and the commutators are
left normed, e.g., $[x_1,x_2,x_3]=[[x_1,x_2],x_3]$. (A basis of
$F_m([{\mathfrak A}^2,{\mathfrak E}])$ is given by Kuzmin \cite{Ku}.)
Since the commutators $[x_{i_1},x_{i_2},x_{i_3},\ldots,x_{i_n}]$
form a basis of the free metabelian Lie algebra
$F_m({\mathfrak A}^2)$ and are linearly independent in
$F_m(\text{\rm var}(M_2(K)^{(-)}))$, we obtain immediately that
\[
\text{\rm GKdim}(F_m(\text{\rm var}(M_2(K)^{(-)})))=m,\quad m>1.
\]
In characteristic 2 there is another three-dimensional simple Lie algebra
which is an analogue of the Lie algebra of the three-dimensional
real vector space with the vector multiplication.
It is interesting to see whether this algebra has a finite basis of its
polynomial identities (probably not) and, when the field is infinite,
to compute the Gelfand-Kirillov dimension of the corresponding
relatively free algebras.
\end{remark}

\section*{Acknowledgements}
The authors are grateful to the anonymous referees for the useful suggestions for improving of the exposition.


\begin{thebibliography}{99}

\bibitem{AM}
M.F. Atiyah, I.G. Macdonald,
{\it Introduction to Commutative Algebra},
Addison-Wesley, Reading, Mass., 1969.

\bibitem{B}
Yu. A. Bahturin,
{\it Two-variable identities in the Lie algebra $sl(2,k)$} (Russian),
Trudy Sem. Petrovsk. {\bf 5} (1979), 205-208.
Translation: Contemp. Soviet Math. Petrovski\v{\i} Seminar {\bf 5},
Topics in Modern Math., Plenum, New York-London (1985), 259-263.

\bibitem{D1}
V. Drensky,
{\it Codimensions of T-ideals and Hilbert series
of relatively free algebras},
J. Algebra {\bf 91} (1984), 1-17.

\bibitem{D2}
V. Drensky,
{\it Gelfand-Kirillov dimension of PI-algebras},
in ``Methods in Ring Theory, Proc. of the Trento Conf.'',
Lect. Notes in Pure and Appl. Math. {\bf 198},
Dekker, New York, 1998, 97-113.

\bibitem{D3}
V. Drensky,
{\it Defining relations for the algebra of invariants of
$2\times 2$ matrices},
Algebras and Representation Theory {\bf 6} (2003), 193-214.

\bibitem{DFi}
V. Drensky, \c{S}. F{\i}nd{\i}k,
{\it Inner automorphisms of Lie algebras related with generic $2\times 2$ matrices},
Algebra and Discrete Mathematics {\bf 14} (2012), No. 1, 49-70.

\bibitem{DFo}
V. Drensky, E. Formanek,
{\it Polynomial Identity Rings},
Advanced Courses in Mathematics, CRM Barcelona,
Birk\-h\"auser, Basel-Boston, 2004.

\bibitem{Ki}
A. A. Kirillov,
{\it Certain division algebras over a field of rational functions}
(Russian), Funkts. Anal. Prilozh. {\bf 1} (1967), No. 1, 101-102.
Translation: Funct. Anal. Appl. {\bf 1} (1967), 87-88.

\bibitem{KL}
G. R. Krause, T. H. Lenagan,
{\it Growth of Algebras and Gelfand-Kirillov Dimension},
Pitman Publ., London, 1985.
Revised edition: Graduate Studies in Mathematics {\bf 22},
AMS, Providence, RI, 2000.

\bibitem{Ku}
Yu. V. Kuzmin,
{\it Free center-by-metabelian groups, Lie algebras and $\mathcal D$-groups} (Russian),
Izv. Akad. Nauk SSSR Ser. Mat. {\bf 41} (1977), No. 1, 3-33, 231.
Translation: Math. USSR Izvestija {\bf 11} (1977), No. 1, 1-30.

\bibitem{MK}
G. G. Machado, P. Koshlukov,
{\it GK dimension of the relatively free algebra for $sl_2$},
Monatsh. Math. {\bf 175} (2014), 543-553.

\bibitem{Pe}
V. M. Petrogradsky,
{\it Intermediate growth in Lie algebras and their enveloping algebras},
J. Algebra {\bf 179} (1996), 459-482.

\bibitem{P1}
C. Procesi,
{\it Non-commutative affine rings},
Atti Accad. Naz. Lincei, Ser. 8, {\bf 8} (1967), 237-255.

\bibitem{P2}
C. Procesi,
{\it The invariant theory of $n\times n$ matrices},
Adv. Math. {\bf 19} (1976), 306-381.

\bibitem{P3}
C. Procesi,
{\it Computing with $2 \times 2$ matrices},
J. Algebra {\bf 87} (1984), 342-359.

\bibitem{R}
Yu. P. Razmyslov,
{\it The existence of a finite basis of the identities of a matrix algebra of second order
over a field of characteristic $0$} (Russian),
Algebra i Logika {\bf 12} (1973), 83-113.
Translation: Algebra and Logic {\bf 12} (1973), 43-63.

\bibitem{Sh}
A. L. Shmelkin,
{\it Wreath products of Lie
algebras and their application in the theory of groups} (Russian),
Trudy Moskov. Mat. Obshch. {\bf 29} (1973), 247-260.
Translation: Trans. Moscow Math.
Soc. {\bf 29} (1973), 239-252.

\bibitem{Si}
K. S. Sibirskii,
{\it Algebraic invariants for a set of matrices} (Russian),
Sib. Mat. Zh. {\bf 9} (1968), No. 1, 152-164.
Translation: Siber. Math. J. {\bf 9} (1968), 115-124.

\bibitem{VL}
M. R. Vaughan-Lee,
{\it Varieties of Lie algebras},
Quart. J. Math. Oxford Ser. (2) {\bf 21} (1970), 297-308.

\end{thebibliography}
\end{document}